\newtheorem{theorem}{Theorem}
\newtheorem{lemma}[theorem]{Lemma}
\begin{document}

\title{Lattice stick number of spatial graphs}

\author[H. Yoo]{Hyungkee Yoo}
\address{Department of Mathematics, Korea University, Seoul 02841, Korea}
\email{lpyhk727@korea.ac.kr}
\author[C. Lee]{Chaeryn Lee}
\address{Department of Mathematics, University of Illinois Urbana Champaign, Urbana, IL 61801}
\email{chaeryn2@illinois.edu}
\author[S. Oh]{Seungsang Oh}
\address{Department of Mathematics, Korea University, Seoul 02841, Korea}
\email{seungsang@korea.ac.kr}

\keywords{graph, lattice stick number, upper bound}
\thanks{Mathematics Subject Classification 2010: 57M25, 57M27}
\thanks{The corresponding author(Seungsang Oh) was supported by the National Research Foundation of Korea(NRF) grant funded by the Korea government(MSIP) (No. NRF-2017R1A2B2007216).}

\maketitle

\begin{abstract}
The lattice stick number of knots is defined to be the minimal number of straight sticks in the cubic lattice
required to construct a lattice stick presentation of the knot.
We similarly define the lattice stick number~$s_{L}(G)$ of spatial graphs $G$
with vertices of degree at most six (necessary for embedding into the cubic lattice),
and present an upper bound in terms of the crossing number~$c(G)$
$$ s_{L}(G) \leq 3c(G)+6e-4v-2s+3b+k, $$
where $G$ has $e$~edges, $v$~vertices, $s$~cut-components, $b$~bouquet cut-components,
and $k$~knot components.
\end{abstract}

\section{Introduction} \label{sec:intro}
All definitions and statements throughout this paper will concern the piecewise linear category.
A graph consists of vertices connected by edges allowing loops and multiple edges. 
A spatial graph is an embedding of a graph into $\mathbb{R}^3$.
Two spatial graphs are considered the same if there exists an ambient isotopy taking one to the other. 

An interesting question involving spatial graphs may ask 
how we can make a given spatial graph out of straight sticks.
A stick presentation of a spatial graph consists of straight sticks glued together end to end.
The minimal number of sticks necessary to construct a stick presentation of a spatial graph $G$ is 
defined as the {\em stick number} $s(G)$ of $G$.
Specially, we may choose to use sticks in the cubic lattice $\mathbb{Z}^3$.
A spatial graph embedded into $\mathbb{Z}^{3}$ is called its lattice stick presentation.
The length of each stick is a positive integer.
The {\em lattice stick number\/} $s_{L} (G)$ of $G$ is the minimal number of sticks necessary to construct 
a lattice stick presentation of $G$.
See Figure~\ref{fig:latticestick} for an example of a stick and lattice stick presentation of
the $\theta$-curve $3_1$.

\begin{figure}[h]
\includegraphics{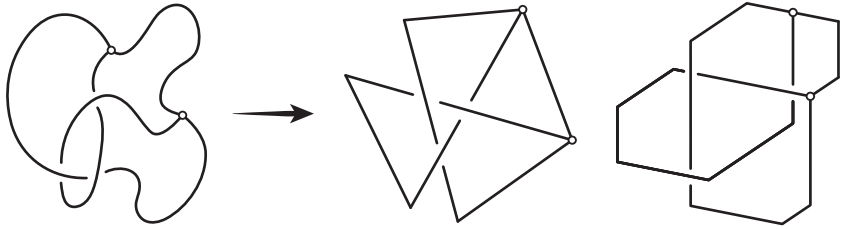}
\caption{Stick and lattice stick presentation}
\label{fig:latticestick}
\end{figure}

Much research has been made on the stick number and lattice stick number of knots and links.
As for the stick number of knots, 
Randell~\cite{R} proved that $s(3_1)=6$, $s(4_1)=7$, $s(K)=8$ for all prime knots with five or six crossings 
and $s(K) \geq 8$ for other nontrivial knots $K$.
The stick number of a $(p, q)$-torus knot with $p < q < 2p$ was shown to be $2q$ by Jin~\cite{Ji}.
Adams et al.~\cite{ACCJSZ} independently found the stick number of all $(n, n \! + \! 1)$-torus knots 
and their compositions.
Negami~\cite{Ne} found bounds on the stick number for nontrivial knots $K$ 
with respect to the crossing number $c(K)$ which is $\frac{5+ \sqrt{25+8(c(K)-2)}}{2} \leq s(K) \leq 2c(K)$.
The upper bound was improved by Huh and Oh~\cite{HO3} to $s(K) \leq \frac{3}{2} c(K)+\frac{3}{2}$.
McCabe~\cite{Mc} proved that $s(K) \leq c(K) +3$ for a $2$-bridge knot or link $K$ 
except the unlink and the Hopf link.
Huh, No and Oh~\cite{HNO} reduced this bound by 1 for a $2$-bridge knot or link $K$ with $c(K) \geq 6$.
The lower bound was improved by Calvo~\cite{Ca} to $\frac{7 + \sqrt{1+8c(K)}}{2} \leq s(K)$.      

There are also various results concerning the lattice stick number of knots and links.
It was proved by Diao~\cite{Di} that $s_{L}(3_1) = 12$.
Huh and Oh~\cite{HO1, HO2} showed that $s_{L}(4_1) = 14$ and
that all other nontrivial knots have lattice stick number at least 15.
Adams et al.~\cite{ACCJSZ} found that $s_{L}(T_{p,p+1}) = 6p$ for $p \geq 2$ where $T_{p,p+1}$ is 
a $(p, p + 1)$-torus knot.
All links with more than one component with lattice stick number at most 14 
were found by Hong, No and Oh~\cite{HNO2}.
Janse Van Rensburg and Promislow~\cite{JaP} proved that $s_{L}(K) \geq 12$ for nontrivial knots $K$.
Adams et al.~\cite{ACCJSZ} found an upper bound $s_{L}(K) \leq 6c(K)-4$.
An improved upper bound $s_{L}(K) \leq 3c(K)+2$ was found by Hong, No and Oh~\cite{HNO1} 
using the arc presentation of $K$.

We extend the study of the stick number and lattice stick number of knots to spatial graphs.
Lee, No and Oh~\cite{LNO2} found an upper bound of the stick number of spatial graphs $G$ which is
$s(G) \leq \frac{3}{2} c(G) + 2e - \frac{v}{2} + \frac{3b}{2}$
where $e$, $v$ and $b$ are the number of edges, vertices and bouquet cut-components, respectively.
The notion of bouquet cut-components will be explained just before the main theorem.

In this paper, we find an upper bound of the lattice stick number of spatial graphs.
We only consider spatial graphs which have vertices of degree at most six,
due to the structure of the cubic lattice $\mathbb{Z}^3$.
Following convention, we assume that a spatial graph $G$ does not have vertices of degree one or two,
except when it has a disjoint circle component, called a {\em knot component\/}.
We say that a knot component consists of a vertex and an edge.

\begin{figure}[h]
\includegraphics{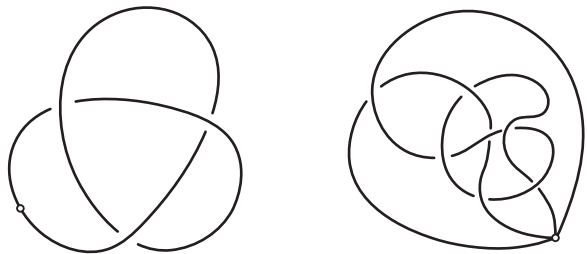}
\caption{Knot component and bouquet  cut-component}
\label{fig:bouquet}
\end{figure}

We consider two types of 2-spheres that separate $G$ into two parts.
Such a 2-sphere is called a {\em splitting-sphere\/} if it does not meet $G$,
and a {\em cut-sphere\/} if it intersects $G$ in a single vertex,
which is called a {\em cut-vertex\/}.
We maximally decompose $G$ into {\em cut-components\/} by cutting $G$ 
along a maximal set $\mathcal{S}$ of splitting-spheres and cut-spheres 
where any two spheres are either disjoint or intersect each other in a cut-vertex.
A cut-component of $G$ is called a {\em bouquet  cut-component\/} if it consists of one vertex and loops.
See Figure~\ref{fig:bouquet}.
Let ${\rm d}(x)$ denote the degree of a vertex $x$ of $G$.

\begin{theorem} \label{thm:bound}
Let $G$ be a spatial graph with  $3 \leq {\rm d}(x) \leq 6$ for each vertex $x$,
only allowing ${\rm d}(x) \! = \! 2$ for the unique vertex of each knot component.
If $G$ has $e$ edges, $v$ vertices, $s$ cut-components, $b$ bouquet cut-components
and $k$ knot components, then
$$ s_{L}(G) \leq 3c(G)+6e-4v-2s+3b+k. $$
\end{theorem}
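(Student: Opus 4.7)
The plan is to generalize the arc-presentation technique that gave $s_{L}(K)\le 3c(K)+2$ for knots, combined with the cut-decomposition of $G$ provided by the maximal family $\mathcal{S}$ of splitting- and cut-spheres. I would first split $G$ along $\mathcal{S}$ into its $s$ cut-components $G_1,\dots,G_s$, construct a lattice stick presentation of each separately, and then reassemble by translating so that shared cut-vertices coincide. Each such gluing absorbs two lattice sticks (one from each side) into a single shared vertex, producing the $-2s$ correction in the bound.

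For each generic cut-component (neither a bouquet nor a knot component), I would start from a regular diagram realizing $c(G)$ and embed it edge-by-edge into $\mathbb{Z}^{3}$ in the spirit of an arc presentation. Each edge is routed as a sequence of axis-parallel sticks with a base cost of $6$, split between a horizontal binding direction and a vertical rise at its two endpoints. At a vertex $x$ with $d=\mathrm{d}(x)\le 6$ incident edges, the endpoints are shared along the (at most six) coordinate directions available at any lattice point, saving four sticks per vertex relative to the ``all edges independent'' count. Together these operations produce the $6e-4v$ portion of the bound.

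Each crossing of the projection is then realized in the lattice by lifting the over-arc via a vertical jog that costs three extra sticks, accounting for the $3c(G)$ term. Knot and bouquet components are handled as exceptional pieces by direct construction: a knot component is a closed curve whose base pattern under-counts by one stick (giving $+k$), while a bouquet cut-component has all its loops meeting at one vertex whose six coordinate directions must accommodate every loop exit and re-entry, forcing three extra sticks per bouquet and giving $+3b$. Summing all contributions yields the claimed bound.

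The main obstacle will be the simultaneous interaction between the vertex savings and the crossing jogs: arranging the routing so that the full $-4v$ saving is attained while leaving enough clearance at each lattice level to insert the vertical jogs without collisions. In particular, a vertex of degree $5$ or $6$ consumes nearly all of its local coordinate directions, so no crossing jog can be placed in its immediate neighborhood; I would expect to need either a careful case analysis near high-degree vertices or a global level scheme that shifts crossings away from vertex neighborhoods. The bookkeeping that verifies the $-4v$ term under these simultaneous constraints, especially when several high-degree vertices sit close together in the diagram, is where I expect most of the technical effort to go.
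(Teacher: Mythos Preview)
Your outline is a plausible heuristic for why the coefficients in the bound might be what they are, but several steps do not hold up as written, and the key structural idea of the paper is missing.

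First, the $-2s$ term does not come from gluing. If $G$ is non-splittable with $s$ cut-components, the tree of cut-spheres has $s-1$ edges, so there are only $s-1$ identifications, and each identification in the paper's construction actually \emph{adds} a $z$-stick rather than saving two. In the paper, the $-2s$ arises because within \emph{each} cut-component one can delete two sticks by a ``side-sliding'' move at the extreme binding points of its lattice arc diagram; this is independent of how the pieces are reattached.

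Second, and more fundamentally, the paper does not work directly from a regular diagram with crossing jogs. The central device is the arc presentation of $G$ together with the bound $\alpha(G)\le c(G)+e+b$ (Lee--No--Oh). From an arc presentation with $\alpha$ arcs one builds a lattice object using $2\alpha$ horizontal sticks plus $\beta=\alpha+v-e$ vertical connectors, then pays an explicit local cost at each vertex of degree $\ge 3$ (one extra $z$-stick at degree~$3$, and $2d-5$ extra sticks at degree $d\ge 4$ via a ``vertex-merging'' move). The algebra then collapses to $3\alpha(G)+3e-4v-2s+k$, and only at the very end does the inequality $\alpha(G)\le c(G)+e+b$ turn $3\alpha$ into $3c(G)+3e+3b$, producing the $6e$ and the $3b$ simultaneously. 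So neither the ``base cost $6$ per edge'' nor the ``$3$ extra sticks per bouquet by direct construction'' is how those terms actually arise; the $3b$ is an artifact of the arc-index bound, not of a separate bouquet construction.

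The obstacle you flag at the end---making the $-4v$ savings coexist with crossing jogs near high-degree vertices---is exactly what the arc-presentation framework sidesteps. Because the arcs are already globally ordered by page number, placing them on distinct $z$-levels resolves all crossings at once with no jogs, and the only delicate local work is the vertex-merging, which is a bounded case analysis for $d\in\{4,5,6\}$. Without the arc index, I do not see how your diagram-based routing can be made to yield the precise constants, and your proposal does not indicate a mechanism for it.
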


It is noteworthy that for a knot $K$ this result implies $s_{L}(K) \leq 3c(K)+4$
which is close to the known upper bound $s_{L}(K) \leq 3c(K)+2$ in~\cite{HNO1}.

\section{Arc index of spatial graphs} \label{sec:arc}

There is an open book decomposition of $\mathbb{R}^3$ that has open
half-planes as pages and the standard $z$-axis as the binding axis.
We embed a spatial graph $G$ into an open book with finitely many pages
so that it meets each page in exactly one simple arc with two different end-points on the binding axis.
Therefore the binding axis contains all vertices of $G$ and
finitely many points from the interiors of the edges of $G$
and each edge of $G$ may pass from one page to another across the binding axis.
The realization of an arc presentation of this type for a spatial graph is introduced in~\cite{LNO1}.
An arc presentation of the $\theta$-curve $3_1$ with six pages is depicted in Figure~\ref{fig:arc}.
The {\em arc index\/} $\alpha(G)$ is defined to be the minimal number of
pages among all possible arc presentations of $G$.

\begin{figure}[h]
\includegraphics{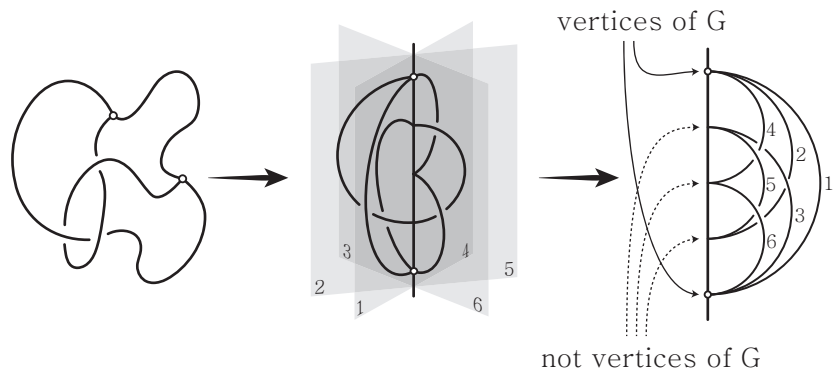}
\caption{Arc presentation}
\label{fig:arc}
\end{figure}

In an arc presentation of $G$,
we turn the pages so that they are all positioned to the right of the binding axis
and assign natural numbers $1, 2, \dots, \alpha(G)$ to all arcs from back to front
which are called the page numbers.
The points of $G$ on the binding axis are called the binding points.

\begin{lemma} \label{lem:binding}
Suppose that $G$ has $v$ vertices and $e$ edges.
If $\beta$ is the number of binding points of an arc presentation of $G$ with $\alpha(G)$ arcs then
$$ \beta = \alpha(G) +v -e.$$
\end{lemma}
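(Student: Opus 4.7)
The plan is to double-count arc endpoints on the binding axis in two different ways and equate the two counts. Each arc is a simple arc in a page with exactly two endpoints on the binding axis, so the total number of arc endpoints is $2\alpha(G)$.

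On the other side of the equation, I partition the $\beta$ binding points into two classes: those that are vertices of $G$, of which there are exactly $v$ (since all vertices of $G$ lie on the binding axis by definition of this kind of arc presentation), and those that are interior points of edges of $G$ where an edge crosses the binding axis, of which there are $\beta - v$. The key local observation is that at a binding point of the second type the edge of $G$ simply passes through the axis, entering on one page and leaving on another, so exactly two arc-endpoints are incident to it. At a vertex $x$, each edge-end incident to $x$ contributes exactly one arc-endpoint at $x$ (the arc immediately leaving $x$ on its page), so the number of arc-endpoints accumulated at $x$ equals $\mathrm{d}(x)$; loops are handled correctly because they contribute two to the degree and correspondingly start two distinct arcs at $x$. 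Knot components are compatible with this counting: the single degree-two vertex of a knot component contributes $2$ arc-endpoints and its single loop-edge is consistent with the handshake identity below.

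Summing over all binding points gives
\[
2\alpha(G) \;=\; \sum_{x \text{ vertex of } G} \mathrm{d}(x) \;+\; 2(\beta - v).
\]
By the handshake lemma $\sum_{x} \mathrm{d}(x) = 2e$, so $2\alpha(G) = 2e + 2(\beta - v)$, which rearranges to the desired identity $\beta = \alpha(G) + v - e$.

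The only real obstacle is the local verification that interior binding points have valence exactly two and that vertex binding points have valence equal to the graph-theoretic degree; once those are justified from the definition of a spatial-graph arc presentation (each page meets $G$ in one simple arc with two endpoints on the axis), the rest is bookkeeping. I do not expect any subtlety beyond making sure loops, multi-edges, and knot-component vertices are counted consistently, which the handshake lemma handles uniformly.
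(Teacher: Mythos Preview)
Your proof is correct and follows essentially the same approach as the paper: both double-count arc endpoints, using that there are $2\alpha(G)$ endpoints in total, that the $\beta - v$ non-vertex binding points each contribute $2$, that each vertex $x$ contributes $\mathrm{d}(x)$, and then apply the handshake identity $\sum_x \mathrm{d}(x) = 2e$. Your version simply spells out the local justifications (loops, multi-edges, knot components) more explicitly than the paper does.
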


\begin{proof}
Given an arc presentation of G with $\alpha(G)$ arcs, let us count the number of endpoints of all the arcs.
Note that each of the $\beta \! - \! v$ binding points which does not correspond to vertices of $G$ has degree 2.
$$ 2\alpha(G)=2(\beta-v)+\sum_{x \in V} {\rm d}(x), $$
where $V$ is the set of vertices of $G$.
Since $\sum_{x \in V}{\rm d}(x)=2e$, we have the desired equality.
\end{proof}

Bae and Park~\cite{BP} found an upper bound on the arc index of a nontrivial knot
in terms of the crossing number.
Lee, No and Oh generalized the argument used in Bae and Park's paper
to find an upper bound on the arc index of a spatial graph as follows.
This bound is crucial in the last part of the proof of the main theorem.

\begin{theorem} {\textup{(Lee-No-Oh~\cite{LNO1})}} \label{thm:LNO1}
Let $G$ be any spatial graph with $e$ edges and $b$ bouquet cut-components.
Then
$$ \alpha(G) \leq c(G)+e+b. $$
\end{theorem}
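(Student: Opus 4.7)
The plan is to generalize Bae and Park's argument for knots to spatial graphs, constructing an arc presentation directly from a minimal crossing diagram. Start from a minimal diagram $D$ of $G$ with $c(G)$ crossings, and break every edge of $D$ at each of its undercrossing points to produce the overpass arcs of $D$. If edge $i$ carries $u_i$ undercrossings then it splits into $u_i+1$ overpass arcs, and since each crossing contributes exactly one undercrossing one has $\sum_i u_i=c(G)$, so the total count of overpass arcs is $e+c(G)$. The goal is to realize each overpass arc as exactly one page of the arc presentation.

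To carry out the realization, choose a line in the projection plane to play the role of the projected binding axis and perturb $D$ so that every overpass arc meets this line only at its two endpoints, which will become binding points. Then order the pages cyclically around the binding axis so that at each crossing the undercrossing page lies strictly behind the overcrossing page, and at each vertex of $G$ the incident pages respect the cyclic order of edges around the vertex. This yields a valid arc presentation with $e+c(G)$ pages, provided every loop of $G$ carries at least one undercrossing. The correction $+b$ handles loops with no undercrossings: such a loop would produce an overpass arc whose two endpoints coincide at the loop vertex, which is illegal, so for each bouquet cut-component one auxiliary binding point is introduced on the axis and the bouquet's loops are rerouted through it, contributing exactly one extra page per bouquet cut-component.

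The main obstacle is the global consistency of the construction. The page assignment has to simultaneously preserve the cyclic order of edges at every vertex of $G$, maintain the over/under relation at every crossing, and accommodate the bouquet correction without disturbing other cut-components. Within a single cut-component this is Bae and Park's wave-move bookkeeping applied to the diagram, and the cut-component decomposition of $G$ then lets these local constructions be glued additively across cut-vertices to yield the claimed bound $\alpha(G)\leq c(G)+e+b$.
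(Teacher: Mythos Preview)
First, note that the paper does not prove this theorem: it is quoted from \cite{LNO1} as a known result and used as a black box in the final calculation. So there is no proof in the present paper to compare against, beyond the remark that Lee--No--Oh obtained it by generalizing Bae and Park's argument.

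On its own merits, your sketch has a concrete error in the accounting for $b$. You say the $+b$ correction is needed only when some loop carries no undercrossing. But consider the trefoil, viewed here as a single bouquet cut-component with $e=1$, $b=1$, $c=3$: its unique loop carries three undercrossings, so by your rule no correction applies and your overpass count gives $c+e=4$ pages. Yet $\alpha(3_1)=5$, matching $c+e+b=5$. Thus the extra page in a bouquet cut-component is required even when every loop has undercrossings; the $+b$ term reflects a structural feature of bouquet cut-components (a single vertex on the binding axis gives too little room to close up the loops), not merely the degenerate case of a crossing-free loop.

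More broadly, the step you describe as ``perturb $D$ so that every overpass arc meets the chosen line only at its two endpoints, then order the pages so that all over/under relations and all vertex cyclic orders are respected'' is essentially the whole theorem, and you have not shown it can be done within the stated page budget. Bae and Park's actual construction does not place the raw overpass arcs of a minimal diagram onto single pages; it first isotopes the diagram into a special position relative to a binding circle via the wave moves you allude to, and it is the arc count \emph{after} that isotopy that yields $c+2$ for a knot. The generalization to spatial graphs must carry that isotopy through in the presence of vertices and explain precisely why bouquet cut-components, and only they, force one extra arc. Your outline names the right ingredients but does not assemble them into a proof.
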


\section{Lattice presentation} \label{sec:stick}

In this section we prove Theorem~\ref{thm:bound}.
Let $G \! = \! (V,E)$ be a spatial graph with  $3 \leq {\rm d}(x) \leq 6$ for each vertex $x$,
only allowing ${\rm d}(x)=2$ for vertices of knot components.
Recall that each knot component consists of a vertex and an edge.
Suppose that $G$ has $e$ edges, $v$ vertices, $s$ cut-components,
$b$ bouquet cut-components and $k$ knot components.
Let $G=G_1 \cup G_2 \cup \dots \cup G_s$ be the cut-component decomposition of $G$.
Note that $\alpha(G)=\sum \alpha(G_k)$ as mentioned in~\cite[Claim~$2$]{LNO1}.

We may assume that $G$ is non-splittable,
hence no 2-sphere decomposing $G$ is a splitting-sphere.
First we focus on each cut-component.

\subsection{Lattice presentation of a cut-component} \label{subsec:component} \hspace{10mm}

A stick in the cubic lattice $\mathbb{Z}^{3}$ parallel to the $x$-axis is called an $x$-stick.
Each $yz$-plane on which the ends of an $x$-stick are placed is called an $x$-level.
Sticks and levels corresponding to the $y$- and $z$-coordinates are defined analogously.

Given an arc presentation of a spatial graph,
we move the binding axis to the line $y \! = \! x$ on the $xy$-plane
and replace each arc with two connected sticks,
one $x$-stick and one $y$-stick situated properly below the line $y \! = \! x$.
For a better view we slightly perturb some sticks if they overlap.
The result is called a {\em lattice arc diagram\/} of the spatial graph on the plane.
See Figure~\ref{fig:lattice}.

\begin{figure}[h]
\includegraphics{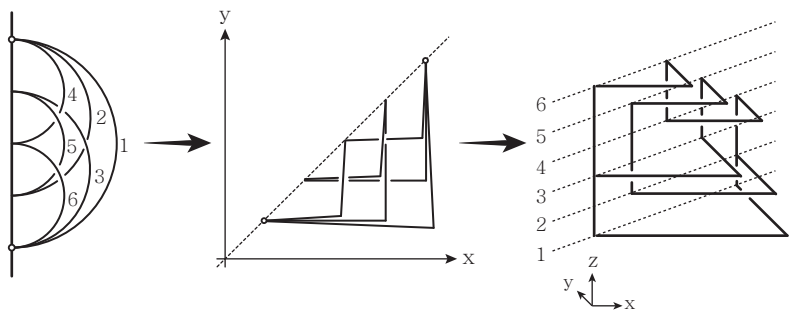}
\caption{Lattice arc diagram and lattice presentation}
\label{fig:lattice}
\end{figure}

We first construct a lattice presentation $L_k$ of a cut-component $G_k$ of $G$ in $\mathbb{Z}^{3}$.
Let $D_k$ be a lattice arc diagram of $G_k$ in the $xy$-plane with $\alpha(G_k)$ arcs
consisting of $\alpha(G_k)$ $x$-sticks and $\alpha(G_k)$ $y$-sticks.
To begin, place all arcs of $D_k$ on $z$-levels in the order of their page numbers starting from the bottom.
For each set of arcs that share a common binding point,
connect the arcs by adding successive $z$-sticks between the lowest and the highest $z$-levels.

We specially distinguish some classes of cut-components.
A cut-component of $G$ is called an {\em arc cut-component\/}
if it consists of one edge with two distinct end points
and a $\theta_n$ {\em cut-component\/} if it consists of two vertices 
and $n$ edges connecting them for $n \geq 2$.
A $\theta_n$ cut-component is called trivial if it can lie on a plane.

We are always able to remove two more sticks from $L_k$ as follows.
Consider the first binding point $b_1$ of $D_k$ where only $x$-sticks are attached.
Focus on the $x$- and $z$-sticks of $L_k$ on the $xz$-plane associated to $b_1$.
One can slide the $z$-sticks in the positive $x$-axis direction on this plane
until the shortest $x$-stick disappears as illustrated in Figure~\ref{fig:sliding}.
This means that we can reduce the number of $x$-sticks by one.
Similarly we repeat this replacement on the last binding point of $D_k$ as well to delete one $y$-stick.
Note that if $G_k$ is a trivial $\theta_n$ cut-component we can delete $n$ $x$-sticks
even though we slide on the $xz$-plane corresponding to binding point $b_1$ only;
if $G_k$ is an arc cut-component, we discuss the matter separately in Section~\ref{subsec:arc}.

\begin{figure}[h]
\includegraphics{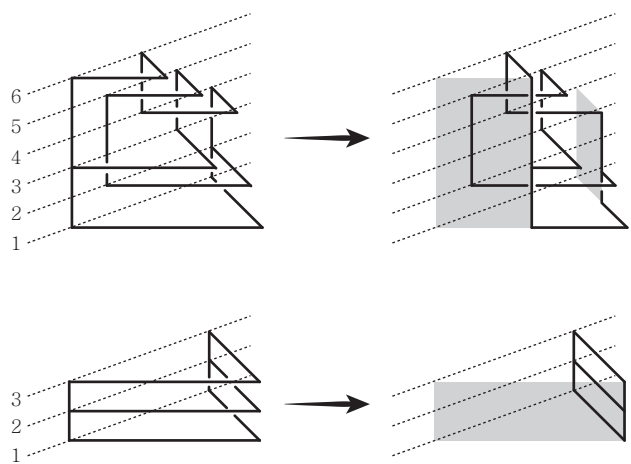}
\caption{Side-sliding}
\label{fig:sliding}
\end{figure}

This replacement is called {\em side-sliding\/}
and the resulting $L_k$ is our desired lattice presentation associated to $G_k$.
The reader must be aware that this lattice presentation $L_k$ is not exactly an embedding of $G_k$
because each vertex of $G_k$ with degree larger than 3 is replaced with
several vertices with degree 3 and connecting edges which are $z$-sticks.
We will adjust this problem in Section~\ref{subsec:merging} by vertex-merging.

\subsection{Combining all cut-components} \label{subsec:comb} \hspace{10mm}

Now we combine all lattice presentations associated to cut-components into one.
Even though there is a unique cut-component decomposition,
sets of decomposing cut-spheres are not unique.
Choose a set $\mathcal{S}$ of decomposing cut-spheres so that
given any two cut-spheres sharing a cut-vertex, one must be contained in the ball bounded by the other.

Since all cut-spheres in $\mathcal{S}$ are partially nested,
we construct a tree structure whose vertices are the cut-components $G_i$'s
and whose edges correspond to the related cut-spheres as in Figure~\ref{fig:tree}.
Re-order the indices of the $G_i$'s so that
$G_1$, as the root of the tree, indicates the cut-component lying in the unbounded region
and the remaining follow the partial order of the tree structure.

\begin{figure}[h]
\includegraphics{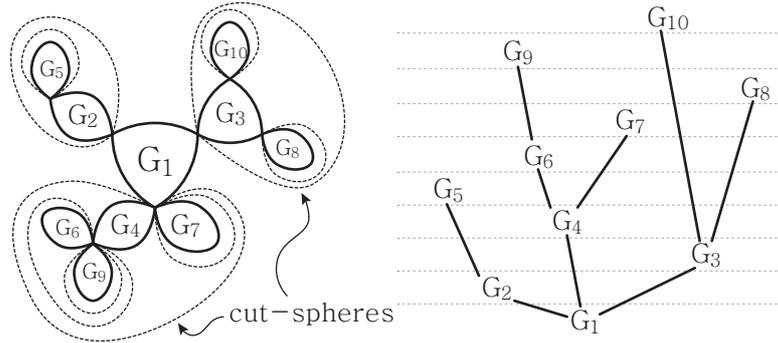}
\caption{Tree structure on the set of cut-components}
\label{fig:tree}
\end{figure}

For each $G_i$, its adjacent cut-components $G_j$ in the tree structure are called
{\em branch cut-components\/} of $G_i$ for $j \! > \! i$, and
{\em stem cut-component\/} otherwise as in Figure~\ref{fig:branch}.
Each cut-component except $G_1$ has a unique stem cut-component.
Furthermore, if two cut-components share a stem cut-component,
they cannot share a cut-vertex
since cut-spheres meeting at a cut-vertex are totally nested.

\begin{figure}[h]
\includegraphics{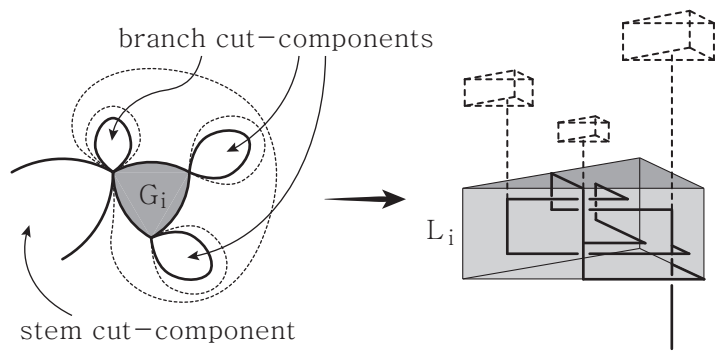}
\caption{Branch and stem cut-component}
\label{fig:branch}
\end{figure}

Now we stack up all the constructed lattice presentations $L_k$'s in inductive order with respect to the indices as follows.
First we put $L_1$ on the lowest floor.
After stacking $L_k$'s for $k \! < \! j$,
we connect $L_j$ to its unique stem cut-component $L_i$ ($i \! < \! j$).
There is a set of successive $z$-sticks of $L_i$ related to the cut-vertex of $G$ where $G_i$ and $G_j$ meet.
Let $\overline{p}_i$ be the topmost point of the union of these $z$-sticks.
Similarly let $\underline{p}_j$ be the bottommost point of the union of the successive $z$-sticks
of $L_j$ related to the same cut-vertex.
We take an infinite cylinder for $L_j$ whose core axis runs through $\overline{p}_i$ and is parallel to the $z$-axis.
Assume that the radius of the cylinder is sufficiently small.
Fit $L_j$ into the cylinder so that $\underline{p}_j$ lies on the core of the cylinder and
$L_j$ is positioned above $L_{j-1}$.
Connect the two points $\overline{p}_i$ and $\underline{p}_j$ by a $z$-stick as in Figure~\ref{fig:combing}.
Let $L$ be the lattice presentation obtained by combining all the $L_k$'s.
It is noteworthy that the word ``sufficiently small" requires
the set of constructed cylinders to be partially nested as in the tree structure.

\begin{figure}[h]
\includegraphics{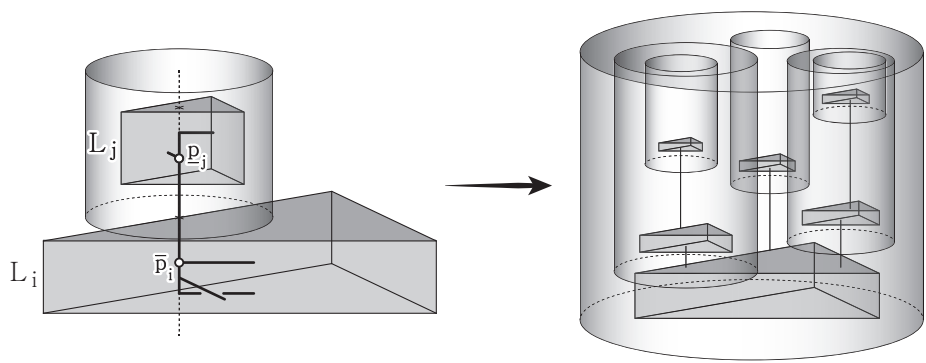}
\caption{Combining cut-components}
\label{fig:combing}
\end{figure}

\subsection{Vertex-merging} \label{subsec:merging} \hspace{10mm}

In this subsection we adjust this lattice presentation $L$ to be an embedding of $G$.
We only need to consider the arcs that are related to the vertices of $G$ with degree larger than 3.

Let $x$ be a vertex of $G$ with degree $d$ where $4 \leq d \leq 6$.
In the previous construction of $L$, vertex $x$ is replaced by $d \! - \! 2$ vertices with degree 3
as in the left picture depicted in Figure~\ref{fig:merging}.
There are $d \! - \! 1$ successive $z$-sticks and $d$ $x$- or $y$-sticks corresponding to $x$.
Assign $e_i$, $i \! = \! 1,\dots,d$, to these $x$- or $y$-sticks from bottom to top, 
and $z_i$ to the endpoint of $e_i$ corresponding to $x$.
In particular, $z_2$ is called the pivot vertex and will eventually indicate the vertex $x$.

\begin{figure}[h]
\includegraphics{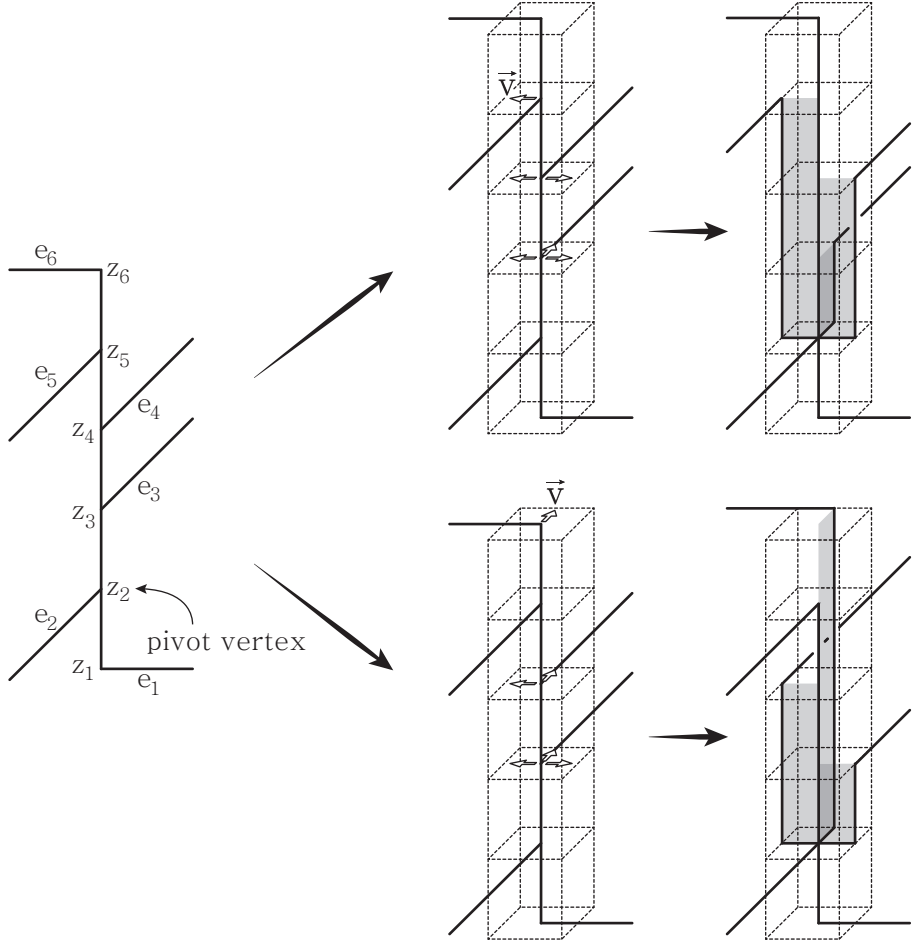}
\caption{Vertex-merging}
\label{fig:merging}
\end{figure}

We start by fixing the sticks $e_1$, $e_2$ and $e_d$.
Now we merge the vertex $z_3$ to the pivot vertex $z_2$ by sliding a small segment of $e_3$ near $z_3$
along the $z$-stick $z'$ connecting $z_2$ and $z_3$ as in Figure~\ref{fig:merging2}.
More precisely, we move this small segment of $e_3$ straight down along the shaded region until it meets $z_2$
and add a $z$-stick parallel to $z'$ connecting the broken endpoints.

There are still other ways to merge $z_3$ to $z_2$;
we slightly translate $e_3$ in one direction perpendicular to $e_3$ on the $xy$-plane.
Similarly, add a small $x$- or $y$-stick and a $z$-stick parallel to $z'$
connecting $z_2$ and the endpoint of $e_3$ near $z_3$.
Obviously the stick attached to the other endpoint of $e_3$ should be shrunk or extended
slightly to be connected to $e_3$.

This procedure is called {\em vertex-merging\/}.
During vertex-merging, vertex $z_3$ with degree 3 disappears,
pivot vertex $z_2$ becomes a vertex with degree 4
and the number of sticks increases by 1.
It is worth noting that there are at least two ways of applying vertex-merging to $z_3$ 
without newly merged $e_3$ colliding with $e_2$.

\begin{figure}[h]
\includegraphics{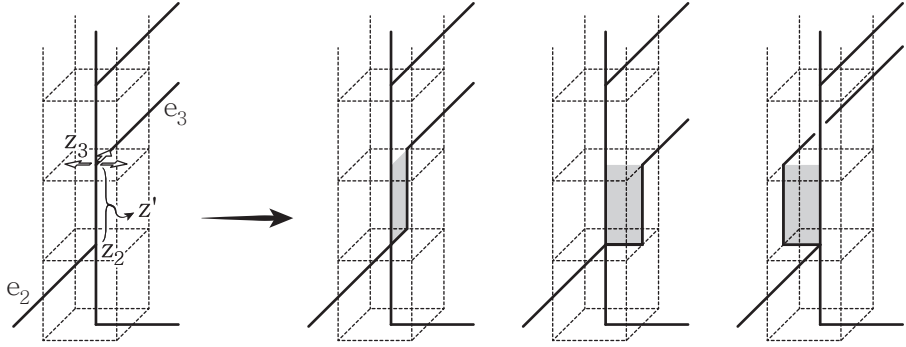}
\caption{Several ways of vertex-merging}
\label{fig:merging2}
\end{figure}

If $d \geq 5$, we similarly repeat vertex-merging on $z_4$
since there is still at least one way of applying vertex-merging to $z_4$ avoiding $e_2$
and the direction occupied by newly merged $e_3$ near $z_2$.

For $d=6$, there is a unique direction $\vec{v}$ which is not occupied by
$e_2$, newly merged $e_3$ nor $e_4$.
If the direction of $e_5$ is not opposite to $\vec{v}$,
then we perform vertex-merging on $z_5$ in the direction $\vec{v}$
as in the upper right picture in Figure~\ref{fig:merging}.
Otherwise, we fix $e_5$ and
instead perform vertex-merging on $z_6$ in the direction $\vec{v}$ as in the lower right picture.

Finally, the vertices $z_3, \dots, z_{d-1}$ with degree 3 are removed
and the pivot vertex $z_2$ becomes a vertex with degree $d$ to represent $x$ of $G$.
We repeat this procedure for all vertices of $G$ with degree larger than 3.
The resulting lattice presentation is now ambient isotopic to $G$.

We can always scale this presentation to fit into $\mathbb{Z}^{3}$
since there are only finitely many sticks which are parallel to $x$-, $y$- and $z$-axes.

\subsection{Reducing the number of sticks for arc cut-components} \label{subsec:arc} \hspace{10mm}

In Section~\ref{subsec:component},
two sticks are removed from each cut-component by side-sliding except from arc cut-components.
We now remove at least two sticks from each arc cut-component as well.

Let $G_k$ be an arc cut-component.
It has a unique stem cut-component $G_i$ and a unique branch cut-component $G_j$.
Even after vertex-merging,
we can take a cut-sphere $S$ separating $G_j$ from $G_k$ which intersects one endpoint $p$ of $G_k$.
We uniformly scale the ball bounded by $S$ small enough while fixing $p$.
This ball contains $G_j$ and all the branch cut-components of $G_j$.
We simply straighten the arc $G_k$ without touching the rest of $G$ as in Figure~\ref{fig:arccut}.
Obviously at least two sticks which are one $x$-stick and one $y$-stick
in the lattice presentation of $G_k$ are removed.

We again scale this presentation to fit into $\mathbb{Z}^{3}$.

\begin{figure}[h]
\includegraphics{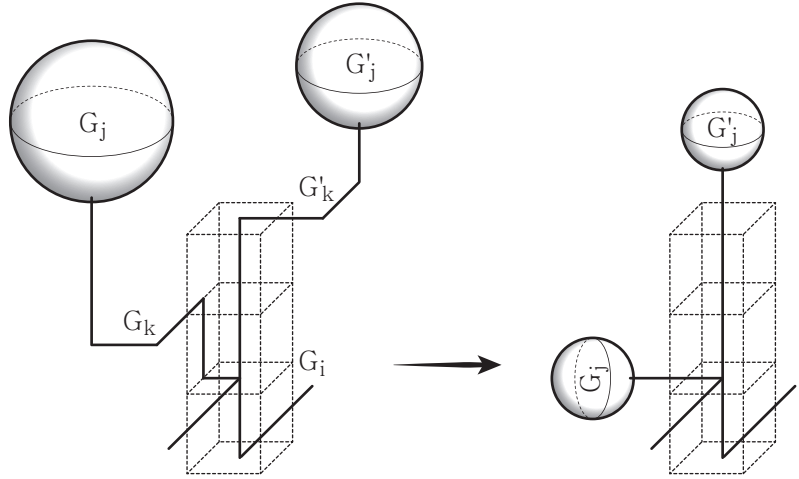}
\caption{Reducing the number of sticks for arc cut-components}
\label{fig:arccut}
\end{figure}

\subsection{Calculation} \label{subsec:cal} \hspace{10mm}

We now count the number of sticks necessary to construct the resulting lattice presentation $L$ of $G$.
Since one pair of $x$- and $y$-sticks comes from each arc of the lattice arc diagram,
$L$ initially had $\alpha(G)$ $x$-sticks and $\alpha(G)$ $y$-sticks.
As the result of side-sliding on each cut-component, we may reduce two more $x$- or $y$-sticks.
For each of the $\beta$ ($=\alpha(G)+v-e$ by Lemma~\ref{lem:binding}) binding points, we need at least one $z$-stick.
For each vertex $x$ with ${\rm d}(x) \! = \! 3$, one additional $z$-stick is necessary, and
for each vertex $x$ with ${\rm d}(x) \! \geq \! 4$,
we additionally need ${\rm d}(x) \! - \! 2$ $z$-sticks
and ${\rm d}(x) \! - \! 3$ $x$-sticks or $y$-sticks as the result of vertex-merging.
In summary,  the number of sticks of $L$ is
\begin{align*}
& 2\alpha(G)-2s+(\alpha(G)-e+v)+ \sum_{{\rm d}(x) = 3} 1 + \sum_{{\rm d}(x) \geq 4}(2{\rm d}(x)-5) \\
& = 3\alpha(G)-e+v-2s+\sum_{x \in V} (2{\rm d}(x)-5)+\sum_{{\rm d}(x) = 2} 1 \\
& = 3\alpha(G)-e+v-2s+2(2e)-5v+k \\
& = 3\alpha(G)+3e-4v-2s+k.
\end{align*}	

Finally, we get
\begin{align*}
s_{L}(G)	& \leq 3\alpha(G)+3e-4v-2s+k \\
	& \leq 3c(G)+6e-4v-2s+3b+k,
\end{align*}	
by applying Theorem~\ref{thm:LNO1}.
This completes the proof of Theorem~\ref{thm:bound}.

\end{document}